\newtheorem{thm}{Theorem}
\newtheorem{lem}{Lemma}
\title{Error of the Galerkin scheme for a semilinear subdiffusion equation with time-dependent coefficients and nonsmooth data}
\author{\L ukasz P\l ociniczak\thanks{Faculty of Pure and Applied Mathematics, Wroc{\l}aw University of Science and Technology, Wyb. Wyspia{\'n}skiego 27, 50-370 Wroc{\l}aw, Poland}$\;^,$\footnote{Email: lukasz.plociniczak@pwr.edu.pl}}
\date{}
\begin{document}
\maketitle

\begin{abstract}
	We investigate the error of the (semidiscrete) Galerkin method applied to a semilinear subdiffusion equation in the presence of a nonsmooth initial data. The diffusion coefficient is allowed to depend on time. It is well-known that in such parabolic problems the spatial error increases when time decreases. The rate of this time-dependency is a function of the fractional parameter and the regularity of the initial condition. We use the energy method to find optimal bounds on the error under weak and natural assumptions on the diffusivity. First, we prove the result for the linear problem and then use the "frozen nonlinearity" technique coupled with various generalizations of Gr\"onwall inequality to carry the result to the semilinear case. The paper ends with numerical illustrations supporting the theoretical results. \\
	
	\noindent\textbf{Keywords}: subdiffusion, Caputo derivative, L1 scheme, nonlinearity, nonsmooth data, time-dependent coefficient\\
	
	\noindent\textbf{AMS Classification}: 35K55, 65M70, 35R11
\end{abstract}

\section{Introduction}
We consider the following semilinear subdiffusion equation on a convex domain $\Omega \subset \mathbb{R}^d$ ($d\geq 1$) with smooth boundary $\partial \Omega$, initial data $\varphi \in L^2(\Omega)$, and Dirichlet boundary condition
\begin{equation}
	\label{eqn:MainPDE}
	\begin{cases}
		\partial^\alpha_t u = \nabla\cdot\left(D(x,t) \nabla u\right) + f(x, t, u), & x\in\Omega, \quad t \in (0,T], \quad \alpha\in(0,1), \\
		u(x,0) = \varphi(x), & x\in\Omega,\\
		u(x,t) = 0, & x\in\partial\Omega, \quad t \in (0,T],\\
	\end{cases}
\end{equation}
where $T>0$ is the fixed time horizon. The evolution in time is governed by the Caputo fractional derivative
\begin{equation}
\label{eqn:Caputo}
	\partial^\alpha_t u(x,t) = \frac{1}{\Gamma(1-\alpha)} \int_0^t (t-s)^{-\alpha} u_s(x,s)ds, \quad \alpha \in (0,1). 
\end{equation}
This fractional derivative can also be written in terms of the fractional integral
\begin{equation}
	\label{eqn:FractionalIntegral}
	I^{\alpha}_t u(x,t) = \frac{1}{\Gamma(\alpha)} \int_0^t (t-s)^{\alpha-1} u(x,s) ds, \quad I_t := I^1_t, \quad \alpha >0,
\end{equation}
as
\begin{equation}
	\partial^\alpha_t u(x,t) := I^{1-\alpha}_t u_t(x,t).
\end{equation}
We assume that the diffusivity $D$ and the source $f$ are smooth functions satisfying
\begin{equation}
\label{eqn:Assumptions}
\begin{split}
	0< D_- \leq D(x,t) \leq D_+, \quad &|D_t(x,t)| \leq \kappa(t) \in L^1((0,T)), \quad D_x(\cdot,t) \in L^2(\Omega), \\
	|f(x,t,u)|\leq F, \quad &|f(x,t,u)-f(x,t,v)| \leq L |u-v|,
\end{split}
\end{equation}
for some positive constants $D_\pm$, $F$, and $L$. We thus require only boundedness and regular behaviour of the nonlinearities. In particular we assume that the time derivative of the diffusivity is only $t-$integrable and $x$-bounded. This is a natural and much weaker assumption than imposed in previous studies \cite{Mus18, Jin19a}, where boundedness on first and second partial derivatives were required. 

There is a strong motivation to consider analytics and numerics for problems of type (\ref{eqn:MainPDE}). Equations of this form, that is where the dependence on time exhibits some memory effects, arise in modelling of many interesting phenomena. For example, a subdiffusive behaviour appears when a randomly walking particle slows down in the sense that its mean-square displacement is sublinear \cite{Met00,Kla08}. A real-world example of this behaviour is water imbibition in certain porous media where complex matrix structure or chemical reactions happening there may slow down the evolution \cite{Plo15,Plo14,Plo19a,El04,El20}. Some other places where the subdiffusion emerges is for example: turbulent flow \cite{Hen02}, material science \cite{Mul96}, viscoelasticity \cite{Amb96}, cell biochemistry \cite{Sun17}, biophysics \cite{Kou08}, and plasma physics \cite{Del05}. An interesting collection of various applications of fractional calculus (not only subdiffusion) can be found in \cite{sun2018new}. 

To see the importance of weak regularity assumptions on $D$ we have to discuss the smoothness of the solution to (\ref{eqn:MainPDE}). Usually it is nonsmooth in time near $t= 0$ which is a very well-known fact for the linear diffusion with or without time-dependent coefficients (see for ex. \cite{Sak11} for the seminal result and \cite{Jin19} for a review). For example, the constant coefficient homogeneous equation has a solution expressible in terms of the Mittag-Leffler function that close to $t=0$ behaves as
\begin{equation}
	u(x,t) \sim \varphi(x) + a(x) t^\alpha, \quad t\rightarrow 0^+,
\end{equation}   
for some function $a$. Therefore, the solution has a weak singularity at the temporal origin. More precisely, for the linear homogeneous equation with constant coefficients we have \cite{Sak11,McL10}
\begin{equation}
	\|\partial^{(m)}_t u\|_q \leq C t^{-\frac{\alpha(q-p)}{2}-m} \|\varphi\|_p, \quad 0\leq p- q \leq 2.
\end{equation}
This is in strong contrast with the classical parabolic equation for which we can take $p\geq q$ (see \cite{Tho07}). Although the constant- or only space-dependent coefficient case is very well studied, the time-dependent, let alone semilinear, has started to be investigated only recently. For example, the above regularity estimate has been proved in \cite{Jin19} by perturbation techniques for the special case $q=2$, $m=1$, $f=f(x,t)$, and diffusivity $D(x,t)$ with bounded first and mixed derivatives. Some further results concerning existence, uniqueness, and regularity of solutions to time-dependent and quasilinear problems with essentially minimal assumptions can be found in \cite{Ver15,Zac12, Kub18}. The nonhomogeneous situation is similar: initial singularity can be avoided only if we assume certain regularity of $f(\cdot, 0, \cdot)$ and its time derivatives \cite{Jin19}. Therefore, following \cite{Mus18} it is well-motivated to assume that the solution of (\ref{eqn:MainPDE}) satisfies 
\begin{equation}
	\label{eqn:Regularity}
	\|u(t)\|_q + t \|u'(t)\|_q \leq C t^{-\frac{\alpha(q-p)}{2}} \|\varphi\|_p, \quad 0\leq p- q \leq 2.
\end{equation}
Since the linear case satisfies the above estimate it is sensible to assume that the solution of our equation has at least the same regularity. We will use that in formulating the Galerkin method. It is now possible to understand our claim that the assumption on $t-$integrability of $D_t$ is natural. Suppose that we would like to numerically solve the \emph{quasilinear} equation with $D=D(x,t,u)$ (to be considered in future work). Then, if the exact solution is $u=u(x,t)$ we can form a new auxiliary diffusivity $\widetilde{D}(x,t) := D(x,t,u(x,t))$. By the chain rule we have $\widetilde{D}_t = D_t + D_u u_t$ and even if $D$ were very smooth, the diffusivity $\widetilde{D}$ would not be such - thanks to (\ref{eqn:Regularity}). In particular, we cannot expect for it to be bounded and we are not permitted to use previous results from the literature. On the other hand, the function $\widetilde{D}_t$ is $t$-integrable and our assumptions (\ref{eqn:Assumptions}) are satisfied. 

As until the time of writing this paper, the majority of results concerning subdiffusion problems investigated mostly either constant or space-dependent diffusivity in both analytical and numerical settings. There is a vast literature on these topics and we mention only several of them. In \cite{Sak11} a complete characterisation of solutions to the constant diffusivity case with a possibly nonsmooth initial condition was given. The numerical treatment and Galerkin method error estimates was carried over for example in \cite{Jin13}. The case of only space-dependent diffusivity has been studied for example in \cite{McL10} from analytical and in \cite{eidelman2004cauchy} from numerical points of view. Many fully discrete schemes were proposed to solve the subdiffusion equation with \emph{time-independent} diffusivity. The Caputo derivative can be discretized in different ways, from which probably the convolution quadrature \cite{Lub04} and the L1 method \cite{Old74} are the most popular. Some recent developments in these fully discrete schemes were given in \cite{Jin16,Cue06} (convolution quadrature) and \cite{jin2016analysis, Kop19, stynes2017error, Lia18} (L1 method). Some interesting results concerning the L1 scheme for the semilinear subdiffusion equation has been published in \cite{al2019numerical} where the nonsmoothness of the initial data has been taken into account. We would also like to mention our previous integral equation approach to devise numerical methods for (\ref{eqn:MainPDE}) in the case of degenerate porous medium equation, that is when $D(x,t,u) \approx u^{m}$ with $m>1$ (see \cite{Plo19,Plo17,okrasinska2022second}). The reader is also invited to consult \cite{Jin19} (and references therein) for a concise survey of numerical methods in the presence of nonsmooth data. 

The situation is different when we allow the diffusion coefficient to depend on time. The frequently used tools such as separation of variables or Laplace transform cannot be used in that case to analyse solutions and their regularity. Instead, energy methods provide a very versatile approach \cite{Ver15,Zac12}. Some approaches also use perturbation techniques \cite{kim2017lq}. The Galerkin method for this time-dependent case was analysed in few papers. The perturbation technique was applied in \cite{Jin19a,jin2020subdiffusion} along with extensive numerical simulations using convolution quadrature. On the other hand, the energy method was applied in \cite{Mus18} to obtain optimal error estimates. In all of these approaches authors assumed boundedness of the first and mixed derivatives of the diffusion coefficient. To relax these assumption is the main motivation of this paper. Moreover, we are also able to extend the optimal error bounds into the semilinear case. 

The paper is structured as follows. The next section contains our main results. It starts with necessary preliminaries where we cite all the literature results that are needed in the sequel. Then, we move to the linear case and prove the nonsmooth optimal error estimate. The section ends with an extension to the semilinear equation with a use of the frozen nonlinearity technique. In Section 3 we collect some numerical results that support our theoretical considerations. 

\section{Error for the Galerkin scheme}

\subsection{Preliminaries}
In this section we collect several known results that will be needed in below considerations. 

By $L^2(\Omega)$ and $H^s(\Omega)$ we denote Hilbert and Sobolev spaces with the usual norms $\|\cdot\|_s$. We also write $\|\cdot\| := \|\cdot\|_0$. The subspace of $H^1$ of functions with vanishing trace is denoted by $H_0^1(\Omega)$ with the norm inherited due to the Poincar\'e-Friedrichs inequality. In regularity theory some additional function spaces are much useful. Let $\{\lambda_n\}_{n=1}^\infty$ denote the eigenvalues of the negative Laplacian with vanishing Dirichlet boundary conditions on $\partial\Omega$. The corresponding eigenfunctions normalized with respect to the $L^2(\Omega)$ norm are denoted by $\{\phi_n\}_{n=1}^\infty$. Then, for any $s\geq 0$ the space $\dot{H}^s (\Omega) \subseteq L^2(\Omega)$ contains precisely these functions $v$ that satisfy $\|v\|_s < \infty$, where \cite{Tho07}
\begin{equation}
\label{eqn:HDotNorm}
	\|v\|_s^2 := \|(-\Delta)^\frac{s}{2} v\| = \sum_{n=0}^\infty \lambda_n^s (v,\phi_n)^2.
\end{equation}
Notice that for $0\leq s < 1/2$ we have $\dot{H}^s(\Omega) = H^s(\Omega)$ while for $1/2 < s \leq 2$ it holds that $\dot{H}^s(\Omega) = \{v\in H^s: \, v = 0 \text{ on } \partial\Omega\}$. In particular, $\dot{H}^1(\Omega) = H_0^1 (\Omega)$ and $\dot{H}^2(\Omega) = H_0^1(\Omega) \cap H^2(\Omega)$.

Further, we recall some results concerning fractional derivatives. Th first one is a nonlocal version of the derivative of a product rule. In general it yields an infinite sum but for the polynomial case it simplifies considerably \cite{Li19}. For any sufficiently smooth $y=y(t)$ we have
\begin{equation}
	\label{eqn:Product}
	\partial^\alpha_t (t y(t)) = t \partial^\alpha_t y(t) + \alpha I^{1-\alpha}_t y(t) + \frac{t^{1-\alpha}}{\Gamma(1-\alpha)} y(0).
\end{equation}
The next elementary identity concerns the integral of a Caputo derivative. It easily follows from the definition (\ref{eqn:Caputo}) that
\begin{equation}
	\label{eqn:IntegralDerivative}
	I_t \partial^\alpha_t y(t) = I^{1-\alpha}_t y(t) - \frac{t^{1-\alpha}}{\Gamma(2-\alpha)} y(0), \quad I^\alpha_t \partial^\alpha_t y(t) = y(t) - y(0).
\end{equation}
Additionally, the fractional integral enjoys some very useful positivity and continuity properties that are inherited from the classical integral. The ones that will particularly be useful are (proofs can be found in \cite{mustapha2014well}, Lemma 3.1)
\begin{equation}
\label{eqn:FracIntContPositiv}
\begin{split}
	\left| \int_0^t (I^{1-\alpha} u(s), v(s)) ds \right| \leq \epsilon \int_0^t (I^{1-\alpha} u(s), u(s)) ds + \frac{1}{4\epsilon (1-\alpha)} \int_0^t (I^{1-\alpha} v(s), v(s)) ds, \\
	\int_0^t (I^{1-\alpha} u(s), u(s)) ds \geq 0, \quad 0 < \alpha < 1,
\end{split}
\end{equation}
where $u=u(x,t)$ and $v=v(x,t)$ are $t$-piecewise continuous. We also have a connection between a derivative and the fractional integral (see \cite{le2016numerical}, Lemma 2.1)
\begin{equation}
\label{eqn:FracIntDer}
	\int_0^t (I^{1-\alpha} u_t(s), u_t(s)) ds \geq C t^{-\alpha} \|u(s)\|, \quad 0 < \alpha < 1,
\end{equation} 
provided that $u(x,0) = 0$ and $u_t$ is piecewise continuous. Note that in general, above inequalities are not $\alpha$-robust, that is the various constants blow up as we approach the classical cases $\alpha\rightarrow 0^+$ and $\alpha\rightarrow 1^-$. However, the proofs for these are straightforward. Lastly, we will need two lemmas concerning inequalities with fractional derivatives. The first one gives a relationship between the inner product and the norm involving a Caputo derivative. It is a nontrivial generalization of the classical result. The following result has been recently proved in \cite{Ver15,Kop22} but for completeness we reprint here its short proof. 
\begin{lem}\label{lem:DerivativeScalar}[\cite{Kop22}, Lemma 2.8; \cite{Ver15}]
	Assume that $v \in L^\infty ((0,T); L^2(\Omega)) \cap W^{1,\infty} ((\epsilon, T); L^2(\Omega))$ for any $0<\epsilon< T$. If $v(0) = 0$ then 
	\begin{equation}
		(\partial^\alpha_t v, v) \geq \|v\| (\partial^\alpha_t \|v \|).
	\end{equation}
\end{lem}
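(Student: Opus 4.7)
The strategy is to rewrite both $(\partial^\alpha_t v, v)$ and $\|v\|\,\partial^\alpha_t\|v\|$ in a common form that exposes a nonnegative ``excess'' over $\tfrac{1}{2}\partial^\alpha_t\|v\|^2$, and then compare those excesses directly. The starting point is the polarization identity $\|v(t)-v(s)\|^2 = \|v(t)\|^2 - 2(v(t),v(s)) + \|v(s)\|^2$, which after differentiation in $s$ yields
\begin{equation*}
(v_s(s), v(t)) \;=\; \tfrac{1}{2}\tfrac{d}{ds}\|v(s)\|^2 \;-\; \tfrac{1}{2}\tfrac{d}{ds}\|v(t)-v(s)\|^2.
\end{equation*}
I would multiply by the Caputo kernel $(t-s)^{-\alpha}/\Gamma(1-\alpha)$, integrate on $(0,t)$, and integrate the second piece by parts: this produces the $s=0$ boundary contribution $\tfrac{t^{-\alpha}}{2\Gamma(1-\alpha)}\|v(t)\|^2$ (using $v(0)=0$) and the nonlocal tail $\tfrac{\alpha}{2\Gamma(1-\alpha)}\int_0^t (t-s)^{-\alpha-1}\|v(t)-v(s)\|^2\,ds$, while the first piece directly reproduces $\tfrac{1}{2}\partial^\alpha_t\|v(t)\|^2$. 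The same calculation applied to the scalar function $y(t):=\|v(t)\|$, which satisfies $y(0)=0$ and inherits enough regularity from $v$ through the $1$-Lipschitz map $w\mapsto\|w\|$, yields the analogous representation with $(\|v(t)\|-\|v(s)\|)^2$ in place of $\|v(t)-v(s)\|^2$.

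Subtracting the two representations, the two ``local'' terms $\tfrac{1}{2}\partial^\alpha_t\|v\|^2$ and $\tfrac{t^{-\alpha}}{2\Gamma(1-\alpha)}\|v(t)\|^2$ cancel exactly and the inequality reduces to
\begin{equation*}
(\partial^\alpha_t v, v)(t) - \|v(t)\|\,\partial^\alpha_t\|v(t)\| \;=\; \tfrac{\alpha}{2\Gamma(1-\alpha)}\int_0^t \frac{\|v(t)-v(s)\|^2 - \bigl(\|v(t)\|-\|v(s)\|\bigr)^2}{(t-s)^{\alpha+1}}\,ds \;\geq\; 0,
\end{equation*}
where nonnegativity of the integrand is the pointwise reverse triangle inequality $\|v(t)-v(s)\|\geq \bigl|\|v(t)\|-\|v(s)\|\bigr|$ squared.

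The main obstacle is justifying the integration by parts under only $v\in W^{1,\infty}((\epsilon,T); L^2(\Omega))$, because $v_s$ need not be integrable at $s=0$ and the kernel $(t-s)^{-\alpha}$ is singular at $s=t$. I would perform all manipulations first on $(\epsilon, t-\delta)$, where $v$ is genuinely Lipschitz in time and the kernel is bounded. Letting $\delta\to 0^+$, the boundary term $\delta^{-\alpha}\|v(t)-v(t-\delta)\|^2$ vanishes by Lipschitz continuity near $t$ (the norm decays like $\delta$, beating $\delta^\alpha$). Letting $\epsilon\to 0^+$, the boundary term $(t-\epsilon)^{-\alpha}\|v(t)-v(\epsilon)\|^2$ converges to $t^{-\alpha}\|v(t)\|^2$ since $v(\epsilon)\to v(0)=0$ in $L^2$, and the interior integrals pass to the limit by dominated convergence, using that $(t-s)^{-\alpha-1}$ is bounded on $(0,t/2)$ together with the uniform $L^\infty_tL^2_x$ bound on $v$ for the region near $s=0$, and the Lipschitz bound $\|v(t)-v(s)\|^2=O((t-s)^2)$ for the region near $s=t$.
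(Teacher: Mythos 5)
Your argument is correct and is essentially the paper's own proof in a slightly more explicit form: both rely on the same integration by parts against the Caputo kernel (using $v(0)=0$ to control the boundary term at $s=0$) to pass to the $(t-s)^{-\alpha-1}$ difference kernel, and your reverse-triangle-inequality step is exactly the Cauchy--Schwarz step of the paper, since $\|v(t)-v(s)\|^2-\bigl(\|v(t)\|-\|v(s)\|\bigr)^2 = 2\bigl(\|v(t)\|\,\|v(s)\|-(v(t),v(s))\bigr)$. Your version has the minor merit of exhibiting the defect as an explicit nonnegative integral and of being more careful about the regularization at the endpoints $s=0$ and $s=t$, which the paper treats informally.
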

\begin{proof}
	The idea behind the proof is to regularize the integrand in (\ref{eqn:Caputo}) in order to integrate by parts. Since $(v(\cdot, s))_s = (v(\cdot, s) - v(\cdot, t))_s$ we have
	\begin{equation}
		\begin{split}
			\partial^\alpha_t v(x,t) 
			&= \frac{1}{\Gamma(1-\alpha)} \int_0^t (t-s)^{-\alpha} (v(x,s)- v(x,t))_s ds \\
			&= \frac{1}{\Gamma(1-\alpha)}\left(t^{-\alpha} v(x,t) - \alpha \int_0^t (t-s)^{-\alpha-1} (v(x,s)- v(x,t)) ds\right).
		\end{split}
	\end{equation}
	where in calculating the first term we used the initial condition for $v$ and the fact that its derivative is bounded. Also, due to this reason the integral is well-defined. Now, if we take the inner product of the above with $v(t)$ we obtain
	\begin{equation}
		\begin{split}
			(\partial^\alpha v, v) 
			&= \frac{1}{\Gamma(1-\alpha)}\left(t^{-\alpha} \|v(t)\|^2 - \alpha \int_0^t (t-s)^{-\alpha-1}((v(s), v(t))- \|v(t)\|^2) ds\right) \\
			&= \frac{\|v(t)\|}{\Gamma(1-\alpha)}\left(t^{-\alpha} \|v(t)\| - \alpha \int_0^t (t-s)^{-\alpha-1}\left(\frac{(v(s), v(t))}{\|v(t)\|}- \|v(t)\|\right) ds\right).
		\end{split}
	\end{equation}
	Hence, by the Cauchy-Schwarz inequality we arrive at
	\begin{equation}
		(\partial^\alpha v, v) \geq \frac{\|v(t)\|}{\Gamma(1-\alpha)}\left(t^{-\alpha} \|v(t)\| - \alpha \int_0^t (t-s)^{-\alpha-1}\left(\|v(s)\|- \|v(t)\|\right) ds\right) = \|v(t)\| (\partial^\alpha_t \|v(t)\|),
	\end{equation}
	which follows by integrating back by parts. 
\end{proof}
The next lemmas are generalizations of the Gr\"onwall's inequality: one to fractional integrals in which we allow for certain singularity in the free term, and the other to integrable kernel in the classical case.
\begin{lem}[Fractional Gr\"onwall inequality, \cite{Hen06,Web19}]
	\label{lem:Gronwall}
	Let $y(t) \geq 0$ be continuous for $0<t\leq T$. Assume that
	\begin{equation}
		y(t) \leq A t^{-\beta} + B I^\alpha y(t), \quad \alpha\in (0,1),
	\end{equation}
	with $A\geq 0$, $B>0$, and $\beta \in [0,1)$. Then, there exist a constant $C=C(\alpha,\beta,B,T)>0$ such that
	\begin{equation}
		y(t) \leq C A t^{-\beta}. 
	\end{equation}
\end{lem}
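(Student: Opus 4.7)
The plan is the classical Picard-style iteration: feed the inequality into itself repeatedly, use the explicit action of $I^{\alpha}$ on power functions, and recognize the partial sum as a truncation of a two-parameter Mittag-Leffler series. The key computation is $I^{\alpha}(t^{-\beta}) = \frac{\Gamma(1-\beta)}{\Gamma(1-\beta+\alpha)} t^{\alpha-\beta}$, which is valid because $\beta<1$. By induction, $I^{k\alpha}(t^{-\beta}) = \frac{\Gamma(1-\beta)}{\Gamma(1-\beta+k\alpha)}\, t^{k\alpha-\beta}$, and substituting into the hypothesis $n$ times yields
\begin{equation*}
	y(t) \;\leq\; A\,\Gamma(1-\beta)\sum_{k=0}^{n-1} \frac{B^{k}\, t^{k\alpha-\beta}}{\Gamma(1-\beta + k\alpha)} \;+\; B^{n} I^{n\alpha} y(t).
\end{equation*}

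Since $0\leq t \leq T$ and $k\alpha-\beta \geq -\beta$, I would bound $t^{k\alpha-\beta}\leq T^{k\alpha}\, t^{-\beta}$ in each summand and factor $t^{-\beta}$ out. The truncated sum is then dominated by the full series
\begin{equation*}
	\Gamma(1-\beta)\sum_{k=0}^{\infty} \frac{(BT^{\alpha})^{k}}{\Gamma(1-\beta + k\alpha)} \;=\; \Gamma(1-\beta)\, E_{\alpha,\,1-\beta}(BT^{\alpha}),
\end{equation*}
which is an entire function of its argument and therefore finite. This produces the desired constant $C = C(\alpha,\beta,B,T)$.

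The main obstacle is controlling the remainder $B^{n} I^{n\alpha} y(t)$ as $n\to\infty$, because we do not a priori know that $y$ has only a $t^{-\beta}$ singularity. I would deal with it by a short bootstrap. Introduce the weighted supremum $Y(t) := \sup_{0 < s \leq t} s^{\beta} y(s)$ and estimate
\begin{equation*}
	s^{\beta}\, I^{\alpha} y(s) \;\leq\; \frac{Y(s)}{\Gamma(\alpha)}\, s^{\beta} \int_{0}^{s}(s-\tau)^{\alpha-1}\tau^{-\beta}\, d\tau \;=\; Y(s)\, s^{\alpha}\, \frac{\Gamma(1-\beta)}{\Gamma(\alpha+1-\beta)},
\end{equation*}
where I used the Beta integral. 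Choose $t_{\ast} \in (0,T]$ so small that $B t_{\ast}^{\alpha}\Gamma(1-\beta)/\Gamma(\alpha+1-\beta) \leq 1/2$; the hypothesis then gives $Y(t)\leq 2A$ on $(0,t_{\ast}]$, i.e. $y(t)\leq 2A t^{-\beta}$. On $[t_{\ast}, T]$, $y$ is continuous hence bounded, so globally $y(t)\leq M\, t^{-\beta}$ for some $M$ depending on $A$, $B$, $\alpha$, $\beta$, $T$.

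With that weighted bound in hand, $I^{n\alpha}y(t)\leq M\, \frac{\Gamma(1-\beta)}{\Gamma(1-\beta + n\alpha)}\, t^{n\alpha - \beta}$, so
\begin{equation*}
	B^{n} I^{n\alpha} y(t) \;\leq\; M\, t^{-\beta}\, \frac{\Gamma(1-\beta)\,(BT^{\alpha})^{n}}{\Gamma(1-\beta + n\alpha)} \;\longrightarrow\; 0 \quad \text{as } n\to\infty,
\end{equation*}
because the Gamma function dominates any exponential. Passing to the limit in the iterated inequality then yields $y(t)\leq A\,\Gamma(1-\beta)\, E_{\alpha,1-\beta}(BT^{\alpha})\, t^{-\beta}$, which is the claim with $C = \Gamma(1-\beta)\, E_{\alpha,1-\beta}(BT^{\alpha})$.
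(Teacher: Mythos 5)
The paper does not actually prove this lemma; it is quoted from the cited references, and your Picard-iteration/Mittag--Leffler argument is precisely the classical proof from those sources (Henry's Lemma 7.1.1 and its descendants). The computation of $I^{k\alpha}(t^{-\beta})$, the resummation into $\Gamma(1-\beta)E_{\alpha,1-\beta}(BT^{\alpha})$, and the identification of the constant are all correct. The one place where your write-up has a genuine hole is the bootstrap you use to control the remainder $B^{n}I^{n\alpha}y(t)$.

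The absorption step ``$Y(t)\leq A+\tfrac{1}{2}Y(t)$ hence $Y(t)\leq 2A$'' is only valid if you already know $Y(t):=\sup_{0<s\leq t}s^{\beta}y(s)<\infty$. Nothing in the hypotheses guarantees this: $y$ is continuous only on the open-at-zero interval $(0,T]$ and could in principle blow up faster than $s^{-\beta}$ at the origin, in which case $Y(t)=+\infty$ for every $t$ and the inequality $Y(t)\leq A+\tfrac12 Y(t)$ is vacuous. (You cannot repair this by truncating the supremum to $[\epsilon,t]$ either, because the integral $\int_{0}^{s}$ in $I^{\alpha}y$ still sees the uncontrolled region $(0,\epsilon)$.) The standard and simpler fix is to observe that for the hypothesis $y(t)\leq At^{-\beta}+BI^{\alpha}y(t)$ to be non-vacuous one must have $I^{\alpha}y(t)<\infty$, which forces $y\in L^{1}(0,t)$; then for $n\alpha\geq 1$ the kernel $(t-s)^{n\alpha-1}$ is bounded by $T^{n\alpha-1}$, so
\begin{equation*}
	B^{n}I^{n\alpha}y(t)\;\leq\;\frac{B^{n}T^{n\alpha-1}}{\Gamma(n\alpha)}\int_{0}^{t}y(s)\,ds\;\longrightarrow\;0
\end{equation*}
by Stirling, with no a priori pointwise bound on $y$ needed. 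Replacing your bootstrap by this one-line estimate closes the gap and leaves the rest of your argument, and the final constant $C=\Gamma(1-\beta)E_{\alpha,1-\beta}(BT^{\alpha})$, intact.
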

Note that in the below lemma we do not assume anything about the sign of $y$ and $F$. 
\begin{lem}[Generalized classical Gr\"onwall inequality, \cite{corduneanu2008principles}]
	\label{lem:GronwallGeneralized}
	Suppose $\kappa$ is non-negative and integrable. Then, for any real-valued functions $y$ and $F$ the inequality 
	\begin{equation}
		y(t) \leq F(t) + \int_0^t \kappa(s) y(s) ds,
	\end{equation}
	implies
	\begin{equation}
		y(t) \leq F(t) + \int_0^t \kappa(s) \exp\left(\int_s^t \kappa(z) dz\right) F(s) ds.
	\end{equation}
\end{lem}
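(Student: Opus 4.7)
My plan is to proceed by the standard integrating-factor trick, reducing the integral inequality to a linear differential inequality for the accumulated term $\int_0^t\kappa(s)y(s)\,ds$. Concretely, I would set
\begin{equation}
G(t) := \int_0^t \kappa(s)\,y(s)\,ds,
\end{equation}
so that the hypothesis reads $y(t) \leq F(t) + G(t)$. Since $\kappa \geq 0$, multiplying by $\kappa(t)$ preserves the inequality, giving $\kappa(t)y(t) \leq \kappa(t)F(t) + \kappa(t)G(t)$. Because $\kappa$ is integrable, $G$ is absolutely continuous and $G'(t) = \kappa(t)y(t)$ almost everywhere, so
\begin{equation}
G'(t) - \kappa(t)\,G(t) \leq \kappa(t)\,F(t) \quad \text{a.e.\ on } (0,T).
\end{equation}

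The next step is to introduce the integrating factor $\mu(t) := \exp\!\left(-\int_0^t \kappa(z)\,dz\right)$, which is well-defined since $\kappa \in L^1$. Multiplying the differential inequality by $\mu(t) > 0$ yields $\bigl(\mu(t) G(t)\bigr)' \leq \mu(t)\,\kappa(t)\,F(t)$ almost everywhere. Integrating from $0$ to $t$, using $G(0) = 0$, and dividing by $\mu(t)$ gives
\begin{equation}
G(t) \leq \int_0^t \kappa(s)\,F(s)\,\exp\!\left(\int_s^t \kappa(z)\,dz\right)\!ds.
\end{equation}
Substituting this back into $y(t) \leq F(t) + G(t)$ yields exactly the claimed bound.

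The only genuinely delicate point is the fact that $\kappa$ is merely integrable, not continuous, so the classical ODE manipulations have to be justified in the Carathéodory / absolute continuity sense. Since $y$ is required only to be real-valued, one also needs tacitly that $\kappa y$ is locally integrable (otherwise $G$ is not defined); this is in any case implicit in the very statement of the hypothesis, so no additional argument is required. Apart from this mild regularity bookkeeping, the proof is routine, and I expect no further obstacle: neither the sign of $y$ nor of $F$ plays any role in the argument, which matches the remark preceding the lemma.
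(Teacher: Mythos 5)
The paper states this lemma as a cited result from the literature and gives no proof of its own, so there is nothing to compare against; your integrating-factor argument via $G(t)=\int_0^t\kappa(s)y(s)\,ds$ is the standard proof and is correct, including the observation that neither $y$ nor $F$ need be non-negative and that the only regularity issue is absolute continuity of $G$ and of $\mu(t)=\exp\bigl(-\int_0^t\kappa(z)\,dz\bigr)$, both of which follow from $\kappa\in L^1$ (together with the tacit local integrability of $\kappa y$ needed for the hypothesis to make sense). No gaps.
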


We can now proceed to the numerical method. We set up the finite element Galerkin scheme by introducing a family $\mathcal{T}_h$ of shape-regular quasi-uniform triangulations of $\overline{\Omega}$ with the maximal diameter $h = \max_{K\in\mathcal{T}_h} \text{diam}K$. Moreover, let $V_h \subset H_0^1(\Omega)$ be the standard continuous piecewise linear function space over $\mathcal{T}_h$ that vanish on the boundary $\partial\Omega$, that is
\begin{equation}
	V_h := \left\{\chi_h \in C^0(\overline{\Omega}): \; \chi_h|_K \text{ is linear for all } K\in\mathcal{T}_h \text{ and } \chi_h|_{\partial\Omega} = 0 \right\}.
\end{equation}

The weak form of the main parabolic problem is given in the standard way by multiplication by a test function and integration by parts
\begin{equation}
\label{eqn:MainPDEWeak}
	(\partial^\alpha_t u, \chi) + a(D(t); u, \chi) = (f(t, u), \chi), \quad u(0) = \varphi, \quad \chi \in H^1_0(\Omega),
\end{equation}
where we have suppressed writing the $x$-variable in inner products. The bilinear form $a$ is given by
\begin{equation}
	a(w; u, v) = \int_\Omega w(x) \nabla u \cdot \nabla v \, dx.
\end{equation}
The Galerkin approximation $u_h$ to the solution of (\ref{eqn:MainPDEWeak}) is weighted over the finite element space $V_h$
\begin{equation}
\label{eqn:MainPDEGalerkin}
	(\partial^\alpha_t u_h, \chi) + a(D(t); u_h, \chi) = (f(t, u_h), \chi), \quad u_h(0) = P_h \varphi, \quad \chi \in V_h,
\end{equation}
where we use the orthogonal projection operator $P_h$ onto the space $V_h$ defined on $L^2(\Omega)$ as
\begin{equation}
\label{eqn:Projection}
	(v - P_h v, \chi) = 0, \quad \chi\in V_h.
\end{equation}
We have the standard error estimates for sufficiently regular functions \cite{Tho07}
\begin{equation}
\label{eqn:ProjectionError}
	\|v - P_h v\| \leq C h^r \|v\|_r, \quad v \in \dot{H}^r, \quad r=1,2.
\end{equation}
In error analysis the following Ritz projection $R_h$ of $H_0^1$ onto $V_h$ is very useful
\begin{equation}
\label{eqn:Ritz}
	a(D(t, u); v - R_h v, \chi) = 0, \quad \chi \in V_h,
\end{equation} 
where $u$ is a solution of (\ref{eqn:MainPDEWeak}). Standard estimates on the projection error are similar to ones concerning $P_h$
\begin{equation}
\label{eqn:RitzError}
	\| v - R_h v \| \leq C  h^r \|v\|_r, \quad v \in \dot{H}^r, \quad r=1,2.
\end{equation}

\subsection{Linear case}
Now, we are in position to consider the time-dependent coefficient linear case. That is, we assume that the source in (\ref{eqn:MainPDEWeak}) is given by
\begin{equation}
\label{eqn:LinearDiffusivity}
	f = g(x,t),
\end{equation}
and thus we are looking for $v$ satisfying
\begin{equation}
\label{eqn:LinearPDEWeak}
	(\partial^\alpha_t v, \chi) + a(D(t); v, \chi) = (g(t), \chi), \quad v(0) = \varphi, \quad \chi \in H^1_0(\Omega), \quad 
\end{equation}
satisfying a linear equation. Adequately, the Galerkin approximation is
\begin{equation}
\label{eqn:LinearGalerkin}
	(\partial^\alpha_t v_h, \chi) + a(D(t); v_h, \chi) = (g(t), \chi), \quad v_h(0) = P_h \varphi, \quad \chi \in V_h.
\end{equation}
The definition of the Ritz projection (\ref{eqn:Ritz}) changes according to our particular choice of $D$ and we will retain using the same letter for denoting it. A similar remark concerns the assumptions (\ref{eqn:Assumptions}). 

We can now prove the result concerning the error of the above scheme.
\begin{thm}\label{thm:Linear}
Let $v$ be the solution of (\ref{eqn:LinearPDEWeak}) with regularity (\ref{eqn:Regularity}) and $v_h$ its Galerkin approximation calculated from (\ref{eqn:LinearGalerkin}). Then, assuming (\ref{eqn:Assumptions}) with (\ref{eqn:LinearDiffusivity}) we have
\begin{equation}
\label{eqn:NonsmoothError}
	\|v-v_h\| + h \|\nabla(v-v_h)\| \leq C h^2 t^{-\frac{\alpha(2-p)}{2}} \|\varphi\|_p, \quad 0\leq p\leq 2, \quad 0 < \alpha <1,
\end{equation}
where the constant $C$ depends on $u$ and $\alpha$. 
\end{thm}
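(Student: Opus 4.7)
The plan is to apply the standard Ritz-projection-based error decomposition, tailored to the nonsmooth-data fractional setting. Write $v-v_h = \rho + \theta$ with $\rho := v - R_h v$ and $\theta := R_h v - v_h \in V_h$, where $R_h = R_h(t)$ is the time-dependent Ritz projection from (\ref{eqn:Ritz}). The $\rho$-part is immediate: combining the projection bound (\ref{eqn:RitzError}) with the regularity estimate (\ref{eqn:Regularity}) at $q=2$ gives
\begin{equation*}
\|\rho(t)\| + h\|\nabla\rho(t)\| \leq C h^2 \|v(t)\|_2 \leq C h^2 t^{-\alpha(2-p)/2}\|\varphi\|_p,
\end{equation*}
which already matches (\ref{eqn:NonsmoothError}); the task is to prove the same bound for $\theta$.

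Subtracting (\ref{eqn:LinearGalerkin}) from (\ref{eqn:LinearPDEWeak}) and using $a(D(t);\rho,\chi)=0$ for $\chi\in V_h$ yields the error equation
\begin{equation*}
(\partial_t^\alpha \theta,\chi) + a(D(t);\theta,\chi) = -(\partial_t^\alpha \rho,\chi), \qquad \chi\in V_h.
\end{equation*}
I would then test with $\chi=\theta$ and invoke Lemma \ref{lem:DerivativeScalar} to convert $(\partial_t^\alpha\theta,\theta)$ into $\|\theta\|\,\partial_t^\alpha\|\theta\|$. A caveat is that $\theta(0)=R_h(0)\varphi - P_h\varphi$ is generally nonzero, so a preliminary shift $\theta\mapsto\theta-\theta(0)$ is required (it is admissible because the Caputo derivative annihilates constants, at the cost of an extra $a(D;\theta(0),\cdot)$ term that is easily handled). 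Coercivity $a(D;\theta,\theta)\geq D_-\|\nabla\theta\|^2$ and Cauchy--Schwarz on the right-hand side then produce an inequality of the form $\partial_t^\alpha\|\theta\| \leq \|\partial_t^\alpha\rho\| + \textrm{(lower order)}$, after which applying $I^\alpha_t$ and using (\ref{eqn:IntegralDerivative}) delivers a Volterra-type bound for $\|\theta(t)\|$.

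The main obstacle is estimating $\partial_t^\alpha\rho$. Because $R_h$ is time-dependent through $D(t)$, it does \emph{not} commute with $\partial_t^\alpha$, and formally differentiating the defining identity of $R_h$ in time gives the relation $a(D;\partial_t\rho,\chi) = -a(D_t;\rho,\chi)$, so that the correction term is governed by $|D_t|\leq \kappa(t)$. Under our weak hypothesis (\ref{eqn:Assumptions}) only $\kappa\in L^1((0,T))$ is available, which is precisely the scenario Lemma \ref{lem:GronwallGeneralized} is designed for: after bounding the correction by terms of the form $\int_0^t \kappa(s)\|\theta(s)\|\,ds$ and using (\ref{eqn:Regularity}) to control $v_t$ inside the remaining kernel, the generalized Gronwall inequality absorbs the integrable kernel into an exponential factor without destroying the time-weighted decay. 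I expect this step, and the careful Beta-function bookkeeping when inserting the singular factors $s^{-1-\alpha(2-p)/2}$ into the $I^\alpha_t$ integrals, to be the most delicate part of the argument.

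Finally, the resulting inequality of the form $\|\theta(t)\| \leq C h^2 t^{-\alpha(2-p)/2}\|\varphi\|_p + B\, I^\alpha_t \|\theta(t)\|$ is closed by the fractional Gronwall inequality of Lemma \ref{lem:Gronwall}, which yields exactly the desired rate for $\|\theta\|$. Combining with the $\rho$ bound from the first paragraph gives the $L^2$ piece of (\ref{eqn:NonsmoothError}); for the gradient piece, one repeats the energy argument but keeps the $D_-\|\nabla\theta\|^2$ term on the left rather than discarding it, which, after integrating in time and invoking the positivity property (\ref{eqn:FracIntContPositiv}), produces a bound for $\|\nabla\theta\|$ with one order less in $h$, exactly as needed for the $h\|\nabla(v-v_h)\|$ contribution.
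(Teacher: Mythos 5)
Your setup (the $\rho+\theta$ splitting, the $\rho$ bound, and the error equation for $\theta$) matches the paper, but the way you propose to handle $\theta(0)$ breaks down precisely in the nonsmooth regime the theorem is about. You shift $\theta\mapsto\theta-\theta(0)$ so that Lemma \ref{lem:DerivativeScalar} applies, but $\theta(0)=R_h\varphi-P_h\varphi$ can only be bounded by $Ch^{p}\|\varphi\|_p$ (via $\|\varphi-R_h\varphi\|+\|\varphi-P_h\varphi\|$), not by $Ch^2\|\varphi\|_p$, when $p<2$; and since $\theta(0)$ is constant in time it reappears in the final estimate as a time-independent $O(h^p)$ contribution that cannot be traded for the singular factor $t^{-\alpha(2-p)/2}$. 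The ``extra $a(D;\theta(0),\cdot)$ term'' is worse still, since it involves $\|\nabla\theta(0)\|=O(h^{p-1})$. This is exactly the obstruction the paper's proof is built to avoid: instead of shifting, it multiplies the $\theta$-equation by $t$ and uses the fractional Leibniz rule (\ref{eqn:Product}), so that the initial data enters only through the combination $(\theta(0)+\rho(0),\chi)=(\varphi-P_h\varphi,\chi)$, which vanishes identically for $\chi\in V_h$ by the $L^2$-orthogonality (\ref{eqn:Projection}) --- no bound on $\theta(0)$ itself is ever needed.

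A second, related gap: your Volterra bound requires controlling $I^\alpha_t\|\partial^\alpha_t\rho\|$, but $\partial^\alpha_t\rho=I^{1-\alpha}_t\rho_t$ and the available estimate $\|\rho_t(s)\|\leq Ch^2 s^{-1-\alpha(2-p)/2}\|\varphi\|_p$ is not integrable at $s=0$, so this quantity cannot be bounded directly from (\ref{eqn:Regularity}). The multiplication by $t$ fixes this too: the right-hand side becomes $(t\rho)_t=\rho+t\rho_t=O(t^{-\alpha(2-p)/2})$, which is integrable; the paper then tests with $(t\theta)_t$, uses the continuity and positivity properties (\ref{eqn:FracIntContPositiv}) and (\ref{eqn:FracIntDer}), a careful integration by parts of the gradient terms, and two applications of Lemma \ref{lem:GronwallGeneralized} (with kernels built from $(1/D)_t$ and $\kappa$) to close the argument; Lemma \ref{lem:Gronwall} is not needed in the linear case. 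Your observations about $R_h$ not commuting with $\partial^\alpha_t$ and about the role of $\kappa\in L^1$ are on target, and the gradient bound via the inverse inequality is how the paper concludes as well, but without the weighting by $t$ the core estimate for $\|\theta\|$ does not go through for $p<2$.
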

\begin{proof}
By $e = v - v_h$ we denote the error of the approximation. For any $\chi\in V_h$ due to weak formulations (\ref{eqn:LinearPDEWeak}) and (\ref{eqn:LinearGalerkin}) we have the error equation
\begin{equation}
\begin{split}
	(\partial^\alpha_t e, \chi) + a(D(t); e, \chi) 
	&= (\partial^\alpha_t v, \chi) + a(D(t); v, \chi) - \left( (\partial^\alpha_t v_h, \chi) +  a(D(t); v_t, \chi) \right)\\
	&= (g(t), \chi) - (g(t), \chi) = 0.
\end{split}
\end{equation}
Now, we can express the error in the standard way with the use of Ritz projection (\ref{eqn:Ritz}), that is 
\begin{equation}
	e  = v - R_h v + R_h v - v_h = \rho + \theta,
\end{equation}
and thus we have to find estimates on $\rho$ and $\theta$. The former is easier since due to the Ritz projection error estimate (\ref{eqn:RitzError}) along with regularity assumption (\ref{eqn:Regularity}) we have
\begin{equation}
\label{eqn:rhoError}
	\begin{split}
		\|\rho\| + h \|\rho\|_1 &\leq C h^2 \|v\|_2 \leq t^{-\frac{\alpha(2-p)}{2}} \|\varphi\|_p, \quad 0\leq p\leq 2, \\
		\|\rho_t\| &\leq C h^2 \|v\|_2 \leq t^{-\frac{\alpha(2-p)}{2}-1} \|\varphi\|_p, \quad 0\leq p\leq 2,
	\end{split}
\end{equation}
which are the sought estimates on $\rho$. 

We now focus on bounding $\theta$. First, we find the appropriate differential equation for this quantity and then use to it estimate $\theta$ by certain derivatives and integrals of $\rho$. To this end, for any $\chi\in V_h$ we write
\begin{equation}
\begin{split}
	(\partial^\alpha_t \theta, \chi) + a(D(t); \theta, \chi) 
	&= (\partial^\alpha_t R_h v, \chi) + a(D(t); R_h v, \chi) - (\partial^\alpha_t v_h, \chi) - a(D(t); v_h, \chi) \\
	&= (\partial^\alpha_t R_h v, \chi) + a(D(t); v, \chi) - (g(t), \chi),
\end{split}
\end{equation}
where in the second equality we have used the definition of Ritz projection (\ref{eqn:Ritz}) and the equation for $v_h$, that is (\ref{eqn:LinearGalerkin}). Now, by the exact equation (\ref{eqn:LinearPDEWeak}) we arrive at
\begin{equation}
\label{eqn:ThetaEq}
	(\partial^\alpha_t \theta, \chi) + a(D(t); \theta, \chi) = (\partial^\alpha_t R_h v - \partial^\alpha_t v, \chi) + g(t, \chi) - (g(t), \chi) = - (\partial^\alpha_t \rho, \chi). 
\end{equation}
The strategy is to appropriately integrate the above equation to obtain an estimate on $\theta$. The key point, however, is that we cannot allow the initial condition $\theta(0) = R_h \varphi - P_h \varphi$ to appear in it. Since then, by (\ref{eqn:ProjectionError}) the bound would require smoothness of $\varphi$. In order to overcome this difficulty we multiply the above equation by $t$ and use the fractional derivative of a product rule (\ref{eqn:Product})
\begin{equation}
	(\partial^\alpha_t (t\theta), \chi) + a(D(t); t \theta, \chi) = - (\partial^\alpha_t (t\rho), \chi) + \alpha (I^{1-\alpha}_t \rho + I^{1-\alpha}_t \theta, \chi) - \frac{t^{1-\alpha}}{\Gamma(1-\alpha)} (\theta(0) + \rho(0), \chi),
\end{equation}
for $\chi \in V_h$. The term corresponding to the initial condition vanishes since by the definition of the orthogonal projection (\ref{eqn:Projection}) we have 
\begin{equation}
	(\theta(0) + \rho(0), \chi) = (v(0) - v_h(0), \chi) = (\varphi - P_h \varphi, \chi) = 0, \quad \chi\in V_h.  
\end{equation}
Therefore, our error equation is
\begin{equation}
	(\partial^\alpha_t (t\theta), \chi) + a(D(t); t \theta, \chi) = - (\partial^\alpha_t (t\rho), \chi) + \alpha (I^{1-\alpha}_t \rho , \chi) + \alpha(I^{1-\alpha}_t \theta, \chi), \quad \chi \in V_h,
\end{equation}
which after taking $\chi = (t\theta)_t \in V_h$ and noticing that $\partial^\alpha_t = I^{1-\alpha}_t \partial^1_t$ becomes
\begin{equation}
	(I^{1-\alpha}_t (t\theta)_t,  (t\theta)_t) + (D(t)t \nabla\theta, (t\nabla\theta)_t) = - (I^{1-\alpha}_t (t\rho)_t, (t\theta)_t) + \alpha (I^{1-\alpha}_t \rho , (t\theta)_t) + \alpha(I^{1-\alpha}_t \theta, (t\theta)_t)
\end{equation}
Now, we can integrate the above and use the continuity property (\ref{eqn:FracIntContPositiv}) for an appropriate choice of $\epsilon$ to kick back the terms with $(t\theta)_t$
\begin{equation}
\label{eqn:tThetaIntEq}
\begin{split}
	\int_0^t (I^{1-\alpha}_s (s\theta)_s,  (s\theta)_s) ds &+ \int_0^t (D(s)s \nabla\theta, (s\nabla \theta)_s) \leq \frac{1}{2}\int_0^t (I^{1-\alpha}_s (s\theta)_s,  (s\theta)_s) ds \\
	&+ C\left(\int_0^t |(I^{1-\alpha}_s (s\rho)_s, (s\rho)_s)|ds + \alpha \int_0^t |(I^{1-\alpha}_s \rho , \rho)| ds + \alpha\int_0^t (I^{1-\alpha}_s \theta, \theta) ds\right).
\end{split}
\end{equation}
We would like to obtain an estimate on the $\theta$-term on the right-hand side. To this end, we go back to (\ref{eqn:ThetaEq}) and integrate to arrive at
\begin{equation}
	(I^{1-\alpha}_t \theta, \chi) + \left(\int_0^t D(s) \nabla \theta(s) ds, \chi \right) = -(I^{1-\alpha}_t \rho, \theta) + \frac{t^{1-\alpha}}{\Gamma(2-\alpha)} (\theta(0)+\rho(0), \chi).
\end{equation}
Once again, the projection of the initial condition $\theta(0)+\rho(0) = \varphi - P_h\varphi$ vanishes due to orthogonality. We can thus put $\chi = \theta$ and integrate the second time
\begin{equation}
	\int_0^t (I^{1-\alpha}_s \theta, \theta) ds + \int_0^t \left(\int_0^s D(z) \nabla \theta(z) dz, \nabla\theta(s) \right)ds \leq \frac{1}{2}\int_0^t (I^{1-\alpha}_s \theta, \theta) ds + C\int_0^t |(I^{1-\alpha}_s \rho, \rho)| ds,
\end{equation}
where we have used the continuity property of $I^{1-\alpha}$ as in (\ref{eqn:FracIntContPositiv}) to extract the $\theta$-term. Now, the key-point is to transform the double integral to a form that permits us to integrate by parts
\begin{equation}
\begin{split}
	\int_0^t &\left(\int_0^s D(z) \nabla \theta(z) dz, \nabla\theta(s) \right)ds 
	= \int_0^t \frac{1}{2} \frac{1}{D(s)} \frac{d}{ds} \left\|\int_0^s D(z) \nabla \theta(z) dz\right\|^2 ds \\
	&= \frac{1}{2} \frac{1}{D(t)} \left\|\int_0^t D(s) \nabla \theta(s) ds\right\|^2 - \frac{1}{2}\int_0^t \left(\frac{1}{D(s)}\right)_s \left\|\int_0^s D(z) \nabla \theta(z) dz\right\|^2 ds,
\end{split}
\end{equation}
whence
\begin{equation}
	\int_0^t (I^{1-\alpha}_s \theta, \theta) ds +  \frac{1}{D_-} \left\|\int_0^t D(s) \nabla \theta(s) ds\right\|^2 \leq C \int_0^t |(I^{1-\alpha}_s \rho, \rho)| ds + \int_0^t \left|\left(\frac{1}{D(s)}\right)_s\right| \left\|\int_0^s D(z) \nabla \theta(z) dz\right\|^2 ds.
\end{equation}
Observe that due to integrability of $D_t$ the last integrand makes sense since $(D^{-1})_t = -D_t/D^2$ and $D$ is bounded. The application of the generalized Gr\"onwall inequality (Lemma \ref{lem:GronwallGeneralized}) for the gradient term helps us to obtain
\begin{equation}
\begin{split}
	\int_0^t (I^{1-\alpha}_s \theta, \theta) ds &+  \frac{1}{D_-} \left\|\int_0^t D(s) \nabla \theta(s) ds\right\|^2 \leq C R_1(t)^2 \\
	&+ D_-\int_0^t \left|\left(\frac{1}{D(s)}\right)_s\right| \exp\left(\int_s^t \left|\left(\frac{1}{D(z)}\right)_z\right| dz\right) R_1(s)^2 ds \\
	&- D_-\int_0^t \left|\left(\frac{1}{D(s)}\right)_s\right| \exp\left(\int_s^t \left|\left(\frac{1}{D(z)}\right)_z\right| dz\right) \left(\int_0^s (I^{1-\alpha}_z \theta, \theta) dz\right) ds
\end{split}
\end{equation}
where we have put the $\rho$-term into the definition of $R_1$. Since the integral with $I^{1-\alpha}$ is positive thanks to (\ref{eqn:FracIntContPositiv}) we can discard the last term on the right-hand side. The same can be done to the gradient term. Notice that $R_1(s)$ is increasing and hence,
\begin{equation}
\label{eqn:IBound}
	\begin{split}
		\int_0^t (I^{1-\alpha}_s \theta, \theta) ds 
		&\leq C \left( R_1(t)^2 + \int_0^t \left|\left(\frac{1}{D(s)}\right)_s\right| \exp\left(\int_s^t \left|\left(\frac{1}{D(z)}\right)_z\right| dz\right) R_1(s)^2 ds \right) \\
		&\leq C \left(R_1(t)^2 - R_1(t)^2 \int_0^t \frac{d}{ds} \left( \exp\left(\int_s^t \left|\left(\frac{1}{D(z)}\right)_z\right| dz \right) \right) ds \right) \\
		&= C R_1(t)^2 \exp\left(\int_0^t \left|\left(\frac{1}{D(z)}\right)_z\right| dz\right) \leq C R_1(t)^2,
	\end{split}
\end{equation}
where the last inequality follows from integrability of $D_t$.

We can now go back to (\ref{eqn:tThetaIntEq}) in which we use (\ref{eqn:IBound}) and gather all $\rho$-terms in a new (increasing) function $R_2(t)^2$ to arrive at
\begin{equation}
	C t^{-\alpha} \|t \theta\|^2 +\frac{1}{2} \int_0^t (D(s)s \nabla\theta, (s\nabla \theta)_s) \leq C R_2(t)^2,
\end{equation}
where we have use the derivative property (\ref{eqn:FracIntDer}). We would also like to simplify the gradient term. To this end, notice that $s \nabla\theta (s\nabla \theta)_s = ((s \nabla\theta)^2)_s/2$ and integrate by parts
\begin{equation}
	Ct^{-\alpha} \|t \theta\|^2  + D_-\|t \nabla\theta \|^2 \leq C R_2(s)^2 + \int_0^t \kappa(s) \|s \nabla\theta\|^2 ds,
\end{equation}
where we have used assumptions on the diffusivity (\ref{eqn:Assumptions}). Finally, application of the generalized Gr\"onwall inequality settles us with
\begin{equation}
\begin{split}
	Ct^{-\alpha} \|t \theta\|^2  + D_-\|t \nabla\theta \|^2 &\leq C \left(R_2(s)^2 + \int_0^t \kappa(s) \exp\left(\int_s^t \kappa(z) dz\right) R(s)^2 ds \right. \\
	&\left.- \int_0^t \kappa(s) \exp\left(\int_s^t \kappa(z) dz\right) s^{-\alpha} \|s \theta\|^2 ds \right).
\end{split}
\end{equation}
Discarding the $\theta$ term on the right-hand side and using the fact that $R_2$ is increasing simplifies the above into
\begin{equation}
	Ct^{-\alpha} \|t \theta\|^2  + D_-\|t \nabla\theta \|^2 \leq C R_2(t)^2. 
\end{equation}
What remains is to estimate $R_2(t)^2$ using (\ref{eqn:rhoError}). It contains two terms which are
\begin{equation}
\begin{split}
	\int_0^t |(I^{1-\alpha}_s \rho, \rho)| ds &\leq \int_0^t \|I^{1-\alpha}_s \rho\| \| \rho\| ds \\
	&\leq C h^4 \|\varphi\|^2_p \int_0^t s^{1-\alpha-\frac{\alpha(2-p)}{2}}s^{-\frac{\alpha(2-p)}{2}}ds = C h^4 t^{2-\alpha-\alpha(2-p)} \|\varphi\|^2_p ,
\end{split}
\end{equation}
and
\begin{equation}
	\int_0^t |(I^{1-\alpha}_s (s \rho)_s, (s \rho)_s)| ds \leq \int_0^t  \|I^{1-\alpha}_s (\rho + s \rho_s)\| \|\rho + s \rho_s\| ds \leq C h^4 t^{2-\alpha-\alpha(2-p)} \|\varphi\|^2_p.
\end{equation}
And therefore
\begin{equation}
	R_s(t)^2 \leq C  h^4 t^{2-\alpha-\alpha(2-p)} \|\varphi\|^2_p,
\end{equation}
which brings us to
\begin{equation}
	t^{-\alpha} \|t \theta\|^2 \leq C  h^4 t^{2-\alpha-\alpha(2-p)} \|\varphi\|^2_p,
\end{equation}
which after simplification by powers of $t$ proves our main claim. The estimate on the gradient follows from the inverse inequality (since our mesh is quasi-uniform)
\begin{equation}
	\|\nabla \theta\| \leq C h^{-1} \|\theta\|.
\end{equation}
This concludes the proof. 
\end{proof}
The above proof has several important points. The first one is a careful estimate of $\|t\theta\|$ rather than $\theta$ in order to avoid the emergence of the initial condition. Since then, the error estimates of the projections would require its smoothness. Wherever the initial condition arises, it appears inside a inner product with a function from $V_h$. The second key point is to use the definition of the orthogonal projection to conclude this product vanishes. Lastly, a careful integration by parts and use of the generalized Gr\"onwall inequality allows for extracting the gradient term without the need of bounding $D_t$. 

\subsection{Semilinear case}
We can now return to the semilinear case (\ref{eqn:MainPDEWeak}). The main idea in obtaining the error bounds is to use an intermediate linear finite dimensional problem that utilizes the frozen nonlinearity technique.
\begin{thm}\label{thm:semilinear}
Let $u$ be the solution of (\ref{eqn:MainPDEWeak}) satisfying (\ref{eqn:Regularity}) while $u_h$ its Galerkin approximation (\ref{eqn:MainPDEGalerkin}). Assuming (\ref{eqn:Assumptions}) we have
\begin{equation}
\label{eqn:NonsmoothErrorQuasilinear}
	\|u-u_h\| + h \|\nabla(u-u_h)\| \leq C h^2 t^{-\frac{\alpha(2-p)}{2}} \|\varphi\|_p, \quad 0 \leq p \leq 2, \quad 0<\alpha<1,
\end{equation}
where the constant $C$ depends on $u$ and $\alpha$.
\end{thm}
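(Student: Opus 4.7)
The plan is to reduce the semilinear error to the already-established linear case via the frozen-nonlinearity technique. I would introduce the intermediate finite-dimensional function $w_h \in V_h$ defined as the Galerkin solution of the linear problem in which the nonlinearity is frozen at the exact (semilinear) solution $u$, namely
\begin{equation*}
(\partial^\alpha_t w_h, \chi) + a(D(t); w_h, \chi) = (f(t, u), \chi), \quad w_h(0) = P_h\varphi, \quad \chi \in V_h.
\end{equation*}
Since $u$ itself solves the continuous linear problem with source $g(t) := f(\cdot, t, u(\cdot,t))$, Theorem \ref{thm:Linear} applies verbatim and gives $\|u - w_h\| + h\|\nabla(u - w_h)\| \leq C h^2 t^{-\alpha(2-p)/2} \|\varphi\|_p$. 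It then remains only to estimate $\eta := w_h - u_h \in V_h$, which by subtracting the Galerkin equations for $w_h$ and $u_h$ satisfies $\eta(0)=0$ together with
\begin{equation*}
(\partial^\alpha_t \eta, \chi) + a(D(t); \eta, \chi) = (f(t, u) - f(t, u_h), \chi), \quad \chi \in V_h.
\end{equation*}

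Testing against $\chi = \eta$, I would exploit that the diffusion term is nonnegative (since $D > 0$), that the right-hand side is controlled by $L\|u-u_h\|\|\eta\|$ via Cauchy--Schwarz and the Lipschitz property in (\ref{eqn:Assumptions}), and that Lemma \ref{lem:DerivativeScalar} provides $(\partial^\alpha_t \eta, \eta) \geq \|\eta\| \partial^\alpha_t \|\eta\|$. Dividing by $\|\eta\|$ (justified by a standard $\sqrt{\|\eta\|^2+\varepsilon^2}$ regularization to handle its vanishing set) and inserting the triangle inequality $\|u - u_h\| \leq \|u - w_h\| + \|\eta\|$ yields $\partial^\alpha_t \|\eta\| \leq L\|u-w_h\| + L\|\eta\|$. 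Applying $I^\alpha_t$ and using (\ref{eqn:IntegralDerivative}) with $\eta(0)=0$ turns this into
\begin{equation*}
\|\eta(t)\| \leq L\, I^\alpha_t \|u - w_h\| + L\, I^\alpha_t \|\eta\|.
\end{equation*}
The elementary computation $I^\alpha_t(s^{-\alpha(2-p)/2}) \leq C T^\alpha t^{-\alpha(2-p)/2}$, valid because $\alpha(2-p)/2 < 1$ for every admissible pair $(p,\alpha) \in [0,2]\times(0,1)$, combined with the Theorem \ref{thm:Linear} bound on $u-w_h$ supplies a free term of the form $Ch^2\|\varphi\|_p\, t^{-\alpha(2-p)/2}$. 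The fractional Gr\"onwall inequality of Lemma \ref{lem:Gronwall} then absorbs the residual $L\,I^\alpha_t\|\eta\|$ and produces $\|\eta\| \leq Ch^2 t^{-\alpha(2-p)/2}\|\varphi\|_p$.

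A triangle inequality on $u - u_h = (u - w_h) + \eta$ will deliver the desired $L^2$ bound. For the $H^1$ part I plan to use the inverse inequality $\|\nabla \eta\| \leq Ch^{-1}\|\eta\|$, available because $\eta \in V_h$ and the mesh is quasi-uniform, together with the $H^1$ estimate for $u - w_h$ from Theorem \ref{thm:Linear}; this yields $h\|\nabla(u-u_h)\| \leq Ch^2 t^{-\alpha(2-p)/2}\|\varphi\|_p$. The main obstacle I anticipate is the compatibility between the singular free term dictated by (\ref{eqn:Regularity}) and the hypotheses of Lemma \ref{lem:Gronwall}: one must verify that $I^\alpha_t$ acting on the singular data $t^{-\alpha(2-p)/2}$ does not worsen the exponent, which is exactly what the admissible range of $p$ and $\alpha$ guarantees. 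Aside from the $\varepsilon$-regularization needed to legitimately apply Lemma \ref{lem:DerivativeScalar}, the rest is a rather short finite-dimensional post-processing of the linear estimate.
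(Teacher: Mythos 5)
Your proposal is correct and follows essentially the same route as the paper: the same frozen-nonlinearity auxiliary problem (your $w_h$ is the paper's $v_h$), the same decomposition, the same use of Lemma \ref{lem:DerivativeScalar} and the Lipschitz bound to get $\partial^\alpha_t\|\eta\|\leq C\|u-u_h\|$, the same appeal to Theorem \ref{thm:Linear} and the fractional Gr\"onwall inequality of Lemma \ref{lem:Gronwall}, and the same inverse-inequality treatment of the gradient. The only cosmetic difference is that you run Gr\"onwall on $\|\eta\|$ after inserting the triangle inequality (and you are slightly more careful about the division by $\|\eta\|$), whereas the paper runs it directly on $\|u-u_h\|$; the two are equivalent.
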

\begin{proof}
Let $v$ be the solution of the linear equation (\ref{eqn:LinearPDEWeak}) with $g(x,t) := f(x,t,u(x,t))$ where $u$ is the solution to our main PDE (\ref{eqn:MainPDE}). Then evidently $v = u$ and its Galerkin approximation $v_h$ satisfies
\begin{equation}
\label{eqn:vhEq}
	(\partial^\alpha_t v_h, \chi) + a(D(t); v_h, \chi) = (f(t,u), \chi), \quad v_h(0) = P_h \varphi, \quad \chi\in V_h.
\end{equation} 
We make the following error decomposition
\begin{equation}
\label{eqn:Decomposition}
	u-u_h = u - v_h + v_h - u_h = v - v_h + v_h - u_h =: e + z.
\end{equation}
In order to find the bound on $z \in V_h$ we start with the error equation
\begin{equation}
\begin{split}
	(\partial^\alpha_t z, \chi) + a(D(t); z, \chi) 
	&= (\partial^\alpha_t v_h, \chi) + a(D(t); v_h, \chi) - (\partial^\alpha_t u_h, \chi) - a(D(t); u_h, \chi) \\
	&= (f(t,u) - f(t,u_h), \chi)
\end{split}
\end{equation}
with $\chi\in V_h$. Here, we have used the equation for $v_h$, that is (\ref{eqn:vhEq}), and (\ref{eqn:MainPDEGalerkin}). Notice that $z(0) = v_h(0) - u_h(0) = P_h\varphi - P_h\varphi = 0$ and by the Lipschitz regularity of $f$ we can estimate further with $\chi = z \in V_h$ and Lemma \ref{lem:DerivativeScalar}
\begin{equation}
	  \|z\|(\partial^\alpha_t \|z\|) + D_- \|\nabla z\|^2 \leq C \|z\| \|u-u_h\|.
\end{equation}
Cancelling by $\|z\|$ leads us to
\begin{equation}
	\partial^\alpha_t \|z\| \leq C \|u-u_h\|,
\end{equation}
whence by application of the fractional integral with $z(0) = 0$ we arrive at
\begin{equation}
	\|z\| \leq C I^\alpha_t\|u-u_h\|. 
\end{equation}
Hence, going back to the decomposition (\ref{eqn:Decomposition}) we obtain
\begin{equation}
	\|u-u_h\| \leq \|e\| + \|z\| \leq \|e\| + C I^\alpha_t \|u-u_h\|.
\end{equation}
As we mentioned above, the estimates on $\|e\|$ are precisely the ones obtained in Theorem \ref{thm:Linear}, therefore
\begin{equation}
	\|u-u_h\| \leq C_\alpha h^2 t^{-\frac{\alpha(2-p)}{2}} \|\varphi\|_p + C I^\alpha_t \|u-u_h\|,
\end{equation}
where by $C_\alpha$ we have denoted the $\alpha$-dependent term in (\ref{eqn:NonsmoothError}). Since $\alpha(2-p)/2 < 1$ the first term on the right-hand side above is $t$-integrable and we can use the fractional Gr\"onwall's inequality (Lemma \ref{lem:Gronwall}). The estimate on the gradient follows from Theorem \ref{thm:Linear} and the inverse inequality since $z\in V_h$, that is
\begin{equation}
	\|\nabla(u-u_h)\| \leq \|\nabla e\| + \|\nabla z\| \leq \|\nabla e\| + h^{-1} \|z\|.
\end{equation}
The proof is finished. 
\end{proof}

\section{Numerical experiments}
Below we present several numerical calculations that support the above stated results. All computations are done in one dimension, that is $\Omega = [0,1]$. Since our analysis concerns semi-discretized version of (\ref{eqn:MainPDE}) in order to compute its solution we also have to discretize the time. We have chosen the L1 scheme for the Caputo derivative \cite{Li19a} with a time step $\Delta t$ and extrapolation in time when calculating the nonlinearities. This method was introduced and analysed in our previous paper concerning the quasilinear subdiffusion equation \cite{plociniczak2021linear}. All our calculations were implemented in the Julia programming language. 

\subsection{Order of convergence}
We first give some verification of the second order of convergence of the scheme (\ref{eqn:MainPDEWeak}). Here, we consider several examples of different nature and smoothness in order to explore various situations. One of them is "manufactured" in order to ascertain the smoothness in time. In all examples we estimate the convergence error $p$ with the use of extrapolation (Aitken's method, see for ex. \cite{plociniczak2021linear}), that is
\begin{equation}
\label{eqn:Aitken}
	p \approx \log_2 \frac{\|u_{h/2} - u_h\|}{\|u_{h/4}-u_{h/2}\|},
\end{equation}
where $u_h$ is a numerical solution for a mesh step $h$. This has an additional advantage to be independent on the time discretization since we are comparing numerical solutions obtained for the same temporal grid spacing $\Delta t$. Unless stated otherwise we compute all solutions at $t=T=1$. Examples in this part are the following. 
\begin{enumerate}
	\item Smooth initial condition with smooth \emph{exact} solution in time
	\begin{equation}
	\label{eqn:Order1}
	\begin{split}
		u(x,t) &= (1+t^2)x(1-x), \\
		D(x,t) &= 1+x+t, \quad f(x,t,u) = (1+t^2) (1+2t+4x) + \frac{2t^{2-\alpha}}{(1+t^2)\Gamma(3-\alpha)}u.
	\end{split}
	\end{equation}
	\item Smooth initial condition with $D, D_x \in L^\infty$, $D_t \in L^1$ but $D_t\notin L^\infty$
	\begin{equation}
	\label{eqn:Order2}
		\varphi(x) = x(1-x), \quad D(x,t) = 1 + \frac{1}{2}\cos(2\pi x) + \sqrt{\left|t-\frac{1}{2}\right|}, \quad f(x,t,u) = u(1-u).
	\end{equation}
	\item Nonsmooth (different boundary conditions) initial condition with $D, D_x \in L^\infty$, $D_t \in L^1$ but $D_t\notin L^\infty$
	\begin{equation}
	\label{eqn:Order3}
		\varphi(x) =  
		\begin{cases}
			0, & 0\leq x < \frac{1}{2}, \\
			1, & \frac{1}{2} \leq x \leq 1,
		\end{cases}
		\quad D(x,t) = 1 + \frac{1}{2}\cos(2\pi x) + \sqrt{\left|t-\frac{1}{2}\right|}, \quad f(x,t,u) = u(1-u).
	\end{equation}
\end{enumerate}

Results of our computations with $\Delta t = 2\times 10^{-3}$ and $h=10^{-2}$ are presented in Tab. \ref{tab:Order}. We have also conducted some simulations for smaller time steps and finer mesh grids with essentially the same results. As we can see, the estimated convergence order computed at a fixed time is manifestly equal to $2$ for all $0<\alpha<1$. There are very slight variations for the example with nonsmooth initial condition but they safely can be considered negligible. Therefore, the presence of a nonlinear source and unbounded $D_t$ have no effect on convergence of the method.  

\begin{table}
	\centering
	\begin{tabular}{rccccc}
		\toprule
		& $\alpha = 0.1$ & $\alpha = 0.25$ & $\alpha = 0.5$ & $\alpha = 0.75$ & $\alpha = 0.9$ \\
		\midrule 
		ex. (\ref{eqn:Order1}) & 1.99 & 1.99 & 1.99 & 1.99 & 1.99 \\
		ex. (\ref{eqn:Order2}) & 2.00 & 2.00 & 2.00 & 2.00 & 2.00 \\
		ex. (\ref{eqn:Order3}) & 1.99 & 1.99 & 1.98 & 1.98 & 1.97 \\
		\bottomrule
	\end{tabular}
	\caption{Estimated orders of convergence for examples (\ref{eqn:Order1}-\ref{eqn:Order3}). Here, the temporal grid was taken to be $\Delta t = 2\times 10^{-3}$ while the spatial mesh $h = 10^{-2}$.}
	\label{tab:Order}
\end{table}

\subsection{Error dependence on time}
Now we proceed to investigation of the temporal dependence of the error as in (\ref{eqn:NonsmoothErrorQuasilinear}). Similarly as above we consider several examples. In each of them the exact solution is not available and in order to estimate the error we compute a reference solution for a very fine grid. This has to be taken with care since we do not want the time discretization error to overcome our calculations. According to our knowledge, there are no literature results concerning the time-dependent fully-discrete schemes for semilinear subdiffusion equation. However, we can make an educated guess of the method to act similarly to others. For the L1 method we expect that the full discretization error should behave according to \cite{al2019numerical} similarly as in the case of time-independent diffusivity in a semilinear equation
\begin{equation}
	\|u(t_n) - u^n\| \leq C \left(t^{-\frac{\alpha(2-p)}{2}}h^2 + t_n^{\frac{\alpha p}{2} -1} \Delta t\right)
\end{equation}
Therefore, in order to make the temporal part vanishingly small for all times we should take the smallest $n$ possible, that is $t_1 = \Delta t$. Then, the requirement that the first term above dominates the second is given by
\begin{equation}
\label{eqn:DeltaT}
	\Delta t < h^\frac{2}{\alpha},
\end{equation} 
which could be prohibitively small for $\alpha\rightarrow 0^+$. We have thus decided to consider only $\alpha \geq 0.5$ and plan to conduct thorough analytical and numerical studies of the fully discrete schemes for the time-dependent diffusivity case in the future. All simulations were conducted with $h=0.5\times 10^{-2}$ and $\Delta t$ according to the above formula. In the following examples we always choose the diffusivity and the source as in examples (\ref{eqn:Order2}-\ref{eqn:Order3}) and focus only on smoothness of the initial condition. 
\begin{enumerate}
	\item Smooth initial condition ($p=2$)
	\begin{equation}
	\label{eqn:ErrTime1}
		\varphi(x) = x(1-x) \in \dot{H}^2 = H^2 \cap H_0^1.
	\end{equation}
	\item Initial condition of intermediate smoothness ($p = 3/2$)
	\begin{equation}
	\label{eqn:ErrTime2}
		\varphi(x) = 1-\left|2x-1\right| \in \dot{H}^{\frac{3}{2}-\epsilon}.
	\end{equation}
	\item Initial condition of intermediate smoothness ($p = 1$)
	\begin{equation}
	\label{eqn:ErrTime3}
		\varphi(x) = \sqrt{x(1-x)} \in \dot{H}^{1-\epsilon}.
	\end{equation}
	\item Nonsmooth initial condition ($p = 1/2$)
	\begin{equation}
	\label{eqn:ErrTime4}
		\varphi(x) = \begin{cases}
			0, & 0\leq x < \frac{1}{2}, \\
			1, & \frac{1}{2} \leq x \leq 1;
		\end{cases} \in \dot{H}^{\frac{1}{2} - \epsilon},
	\end{equation}
\end{enumerate}
where $\epsilon>0$ is arbitrary. The check to what space the initial condition belongs is done by observing that the eigenvalues of the negative Dirichlet Laplacian on an interval are $O(n^2)$ when $n\rightarrow \infty$. Additionally, Fourier coefficients of $\varphi$ can be calculated exactly to determine their asymptotic behaviour. For example, when $\varphi(x) = \sqrt{x(1-x)}$ we obtain that its orthogonal expansion coefficients are proportional to $J_1(n \pi/2) n^{-1} = O(n^{-3/2})$ for large $n$. Therefore, the norm (\ref{eqn:HDotNorm}) is finite for $s < 1$. 

According to (\ref{eqn:NonsmoothErrorQuasilinear}) the error versus time plotted on the log-log scale should become parallel to the line with a tangent $-\alpha(2-p)/2$ at least for $t\rightarrow 0^+$. In other words, $t^{\alpha(2-p)/2}$ multiplied by the error should be time-independent. This phenomenon is presented in Fig. \ref{fig:TimeDependence} for $\alpha = 0.75$, however, our calculations showed the same behaviour for different values of $\alpha$. The smoothness degree $p$ of the initial condition corresponds to examples (\ref{eqn:ErrTime1}-\ref{eqn:ErrTime4}). Some quantitative results are presented also in Tab. \ref{tab:ErrTime}. They were obtained by computing the solution to each one of our examples for two different values of $\alpha$ with $h=10^2$, $T = 10^{-2}$, and $\Delta t$ computed according to (\ref{eqn:DeltaT}). Then, the nonlinear regression was applied to the error as a function of time to fit the first $10^2$ error points to $a t^{-s}$. The exponent $s$ has then been put in the table. We can see that the agreement with the theory is decent especially for the larger value of $\alpha$. Calculations for smaller $\alpha$ require very fine time steps in the L1 scheme setting. Using a higher order method or non uniform time grid should make these calculations more feasible. We have not gone into that direction since fully discrete schemes for the investigated problem are not yet fully understood. Nevertheless, our initial simulations show a clear support for the above proved results.  

\begin{figure}
	\centering
	\includegraphics[scale = 0.8]{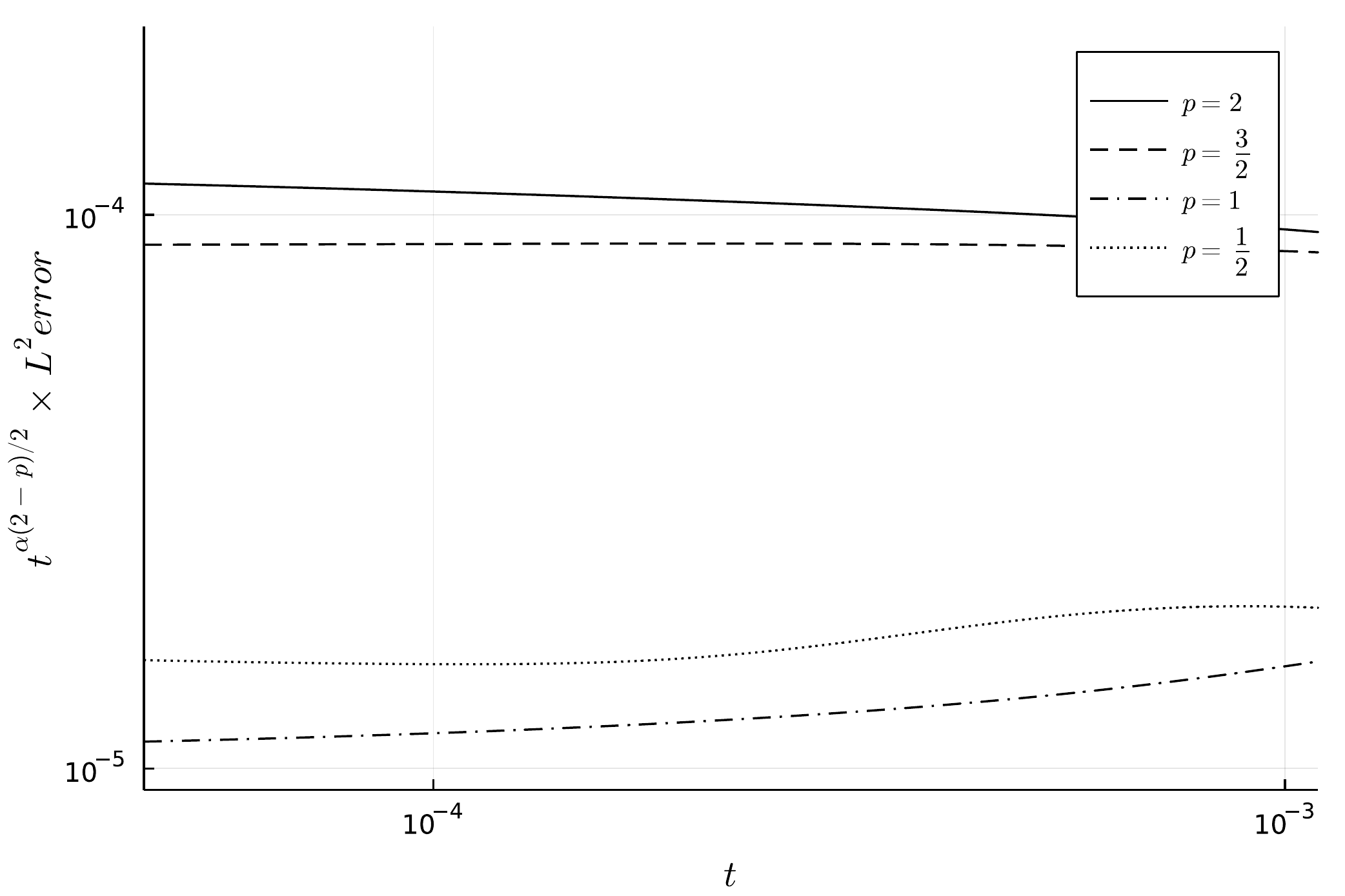}
	\caption{The graph of $t^{\alpha(2-p)/2} \times $ ($L^2$ error) plotted with respect to time on a log-log scale. Here, we have taken $\alpha = 0.75$ and indicated the smoothness $p$ of the initial condition in the legend. }
	\label{fig:TimeDependence}
\end{figure}

\begin{table}
	\centering
	\begin{tabular}{rcc}
		\toprule
		& $\alpha = 0.5$ & $\alpha = 0.75$ \\
		\midrule 
		ex. (\ref{eqn:ErrTime1}) & 0.05 (0.00) & 0.04 (0.00) \\
		ex. (\ref{eqn:ErrTime2}) & 0.17 (1.13) & 0.19 (0.19) \\
		ex. (\ref{eqn:ErrTime3}) & 0.25 (0.25) & 0.33 (0.35) \\
		ex. (\ref{eqn:ErrTime4}) & 0.32 (0.37) & 0.56 (0.56) \\
		\bottomrule
	\end{tabular}
	\caption{Estimated exponent $s$ in the fitted error function $a t^{-s}$ as in (\ref{eqn:NonsmoothErrorQuasilinear}) for examples (\ref{eqn:ErrTime1}-\ref{eqn:ErrTime4}). Here, the temporal grid was taken according to (\ref{eqn:DeltaT}) with $\gamma = 10^{-1}$ while the spatial mesh $h = 10^2$. The number in parenthesis is the exact value $\alpha(2-p)/2$ rounded to two decimal places.}
	\label{tab:ErrTime}
\end{table}

\section{Conclusion}
By using the energy method coupled with two generalizations of the classical Gr\"onwall's inequality we have been able to significantly relax assumptions on the diffusivity $D(x,t)$ and show $L^2$ error estimates of the Galerkin method in the case of nonsmooth data. The linear result could then be carried over into the semilinear case with the use of frozen nonlinearity technique and a recently proved lemma concerning scalar product and the fractional derivative. In our further work we would proceed to the quasilinear subdiffusion equations, that is when the diffusivity is a function of the solution $u$. This example is sufficiently general to model many real-world phenomena for instance, these found in hydrology. This problem could produce interesting questions concerning regularity requirements and estimates on $\|\nabla u_h\|$. Our numerical calculations conducted to support the theory indicated a further need of theoretical analysis of the fully discrete schemes for the time-dependent nonlinear subdiffusion equation. 

\section*{Acknowledgement}
Ł.P. has been supported by the National Science Centre, Poland (NCN) under the grant Sonata Bis with a number NCN 2020/38/E/ST1/00153.

\bibliography{biblio}
\bibliographystyle{plain}

\end{document}